\theoremstyle{plain}
\newtheorem{theorem}{Theorem}
\newtheorem{proposition}[theorem]{Proposition}
\theoremstyle{definition}
\begin{document}

\title{Delayed\ Feedback\ Control\ near\ Hopf\ Bifurcation}
\author{Fatihcan M. Atay \\
\centerline{Max Planck Institute for Mathematics in the Sciences} \\
\centerline{Inselstr.~22, Leipzig 04103, Germany} \\ \bigskip
\centerline{\texttt{atay@member.ams.org}}
}
\date{Preprint. Final version in: \\
\textit{Discrete and Continuous Dynamical Systems--S} 1:197--205 (2008)}
\maketitle


\begin{abstract} The stability of functional differential equations under
delayed feedback is investigated near a Hopf bifurcation. Necessary
and sufficient conditions are derived for the stability of the
equilibrium solution using averaging theory. The results are used to
compare delayed versus undelayed feedback, as well as discrete
versus distributed delays. Conditions are obtained for which
delayed feedback with partial state information can yield stability
where undelayed feedback is ineffective. Furthermore, it is shown
that if the feedback is stabilizing (respectively, destabilizing),
then a discrete delay is locally the most stabilizing (resp.,
destabilizing) one among delay distributions having the same mean.
The result also holds globally if one considers delays that are symmetrically
distributed about their mean.\\ \medskip \\
\textbf{MSC:} 34K35, 93C23, 93D15, 34K20\\
\textbf{Keywords:} Stability, feedback, Hopf bifurcation, distributed delays
\end{abstract}

\section{Introduction}

We study the effect of the feedback function $f$ on the stability of the zero
solution of the functional differential equation%
\begin{equation}
\dot{x}(t)=Lx_{t}+\varepsilon g(x_{t};\varepsilon)+\varepsilon\kappa
f(x_{t};\varepsilon), \label{main}%
\end{equation}
where $x(t)\in\mathbb{R}^{n},$ $x_{t}\in\mathcal{C}\triangleq C([-\tau
,0],\mathbb{R}^{n})$, $x_{t}(\theta)=x(t+\theta)\in\mathbb{R}^{n},$ $\theta
\in[-\tau,0],$ $L:\mathcal{C}\rightarrow\mathbb{R}^{n}$ is linear,
$\varepsilon$ is a small real parameter,
$f,g\in \mathcal{C}\times\mathbb{R}\to\mathbb{R}^{n}$
have continuous second derivatives with respect to each of their arguments and satisfy $f(0;\varepsilon)=g(0;\varepsilon)=0$ for all
$\varepsilon$, and $\kappa\in\mathbb{R}\ $denotes the feedback gain. It is
assumed that the linear problem obtained by setting $\varepsilon=0$ has a pair
of complex conjugate characteristic values $\pm i\omega\neq0$, and all other characteristic values
have negative real parts. Equation (\ref{main}) arises in the study of a
Hopf bifurcation of an equilibrium solution, after rescaling the space
variable $x\rightarrow\varepsilon x$ and the bifurcation parameter
$\alpha\rightarrow\varepsilon\alpha$; see e.g. \cite{Chow-MalletParet77}. In
applications, the problem is related to the feedback control of oscillations,
or conversely, to the oscillatory instabilities arising from delayed feedback
(e.g.~\cite{Atay-IJC02,Atay-LNCIS02}).

The aim of the present paper is to obtain precise conditions under which a
delayed feedback action can stabilize or destabilize an equilibrium solution
near a Hopf bifurcation, in particular when not all the system variables are
available for feedback. Moreover, we are interested in the difference between
discrete and distributed delays in the feedback. Taking advantage of being
near a Hopf bifurcation, we use averaging theory in Section \ref{sec:stab} to
derive necessary and sufficient conditions for stability. The implications for
delayed versus instantaneous feedback are investigated in Section
\ref{sec:partial}. Section~\ref{sec:versus} is devoted to a discussion of
discrete versus distributed delays.

\section{Stability of the zero solution}

\label{sec:stab} We introduce some notation. For $\varepsilon=0$, we write
(\ref{main}) as%

\begin{equation}
\dot{x}(t)=Lx_{t}=\int_{-\tau}^{0}d\eta(\theta)x(t+\theta) , \label{lin}%
\end{equation}
where $\eta$ is an $n\times n$ matrix whose components are of bounded
variation on $[-\tau,0].$ By assumption, (\ref{lin}) has a pair of characteristic values
$\pm i\omega\neq0$. By rescaling time it can be assumed that $\omega=1$
without loss of generality. Assume all other characteristic values have negative real
parts. Let $\Phi\ $be an $n\times2$ matrix whose columns span the eigenspace
of (\ref{lin}) corresponding to the characteristic value $\pm i$. In particular, $\Phi$
can be chosen such that
\begin{equation}
\Phi(\theta)=\Phi(0)e^{J\theta},\quad\theta\in[-\tau,0] \label{Phi}%
\end{equation}
where%
\begin{equation}
J=\left[
\begin{array}
[c]{cc}%
0 & -1\\
1 & 0
\end{array}
\right]  . \label{J}%
\end{equation}
Similarly, let $\Psi$ denote an $n\times2$ matrix whose columns span the
eigenspace corresponding to $\pm i$ for the adjoint equation
\begin{equation}
\dot{z}(t)=-\int_{-\tau}^{0}d\eta^{\top}(\theta)z(t-\theta) \label{adj}%
\end{equation}
on the space $\mathcal{C}^{\ast}=C([0,\tau],\mathbb{R}^{n}).$ Let $F$ and $G$
be $n\times n$ matrices, with elements of bounded variation on $[-\tau,0],$
such that
\begin{align}
[ D_{1}f(0;0)]\phi &  =\int_{-\tau}^{0}dF(\theta)\phi(\theta)\label{F}\\
[ D_{1}g(0;0)]\phi &  =\int_{-\tau}^{0}dG(\theta)\phi(\theta). \label{G}%
\end{align}
We define the scalar functions
\begin{align}
\hat{f}_1(\theta) &  =\operatorname*{tr}\left(\Psi^{\top}(0)F(\theta)\Phi(0)\right),
\quad
\hat{f}_2(\theta)  =\operatorname*{tr}\left(\Psi^{\top}(0)F(\theta)\Phi(0)J\right),
\label{Fhat}\\
\hat{g}_1(\theta) &  =\operatorname*{tr}\left(\Psi^{\top}(0)G(\theta)\Phi(0)\right),
\quad
\hat{g}_2(\theta)  =\operatorname*{tr}\left(\Psi^{\top}(0)G(\theta)\Phi(0)J\right),
\label{Ghat}%
\end{align}
where ``tr" denotes the matrix trace,
and define the real numbers $q,p$ by%
\begin{equation}
q=\int_{-\tau}^{0}\cos\theta\,d\hat{g_1}(\theta) + \int_{-\tau}^{0}\sin\theta\,d\hat{g_2}(\theta)   \label{q}%
\end{equation}%
\begin{equation}
p=\int_{-\tau}^{0}\cos\theta\,d\hat{f_1}(\theta) + \int_{-\tau}^{0}\sin\theta\,d\hat{f_2}(\theta)  \label{p}%
\end{equation}
Then for sufficiently small $\varepsilon$, the stability of the zero solution
of (\ref{main}) is given by the following result.

\begin{theorem}
\label{thm:stab} Let $\kappa\in\mathbb{R}$. There exists $\varepsilon_{0}>0$
such that for $\varepsilon\in(0,\varepsilon_{0})$ the origin is asymptotically
stable (unstable) if
\[
q+\kappa p <0 \quad(>0).
\]
\end{theorem}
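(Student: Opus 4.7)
The plan is to apply the standard combination of center-manifold reduction and averaging for functional differential equations near a Hopf bifurcation, as developed in \cite{Chow-MalletParet77}. The smallness of $\varepsilon$ reduces the stability question to analyzing a two-dimensional averaged linear ODE whose trace turns out to be precisely $\varepsilon(q+\kappa p)$, so the sign condition in the statement is exactly the condition for the averaged flow to contract or expand.

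First I would decompose $\mathcal{C} = P \oplus Q$, where $P$ is the two-dimensional center subspace spanned by the columns of $\Phi$ and $Q$ is the complementary invariant subspace of the semigroup generated by \eqref{lin}; the adjoint basis $\Psi$ is chosen dual to $\Phi$ under the usual bilinear pairing for \eqref{lin}-\eqref{adj}. Writing $x_t = \Phi\, y(t) + w(t)$ with $y(t)\in\mathbb{R}^{2}$ and $w(t)\in Q$, and projecting \eqref{main} onto $P$, one obtains
\begin{equation*}
\dot y = J y + \varepsilon\, \Psi^{\top}(0)\bigl[\,g(\Phi y + w;\varepsilon) + \kappa f(\Phi y + w;\varepsilon)\,\bigr],
\end{equation*}
together with a contractive equation for $w$ forced by $O(\varepsilon)$ terms, so that on the local center manifold $w = O(\varepsilon|y| + |y|^{2})$. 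Since $f(0;\varepsilon)=g(0;\varepsilon)=0$, the genuinely nonlinear contributions are of order $|y|^{2}$ or higher and feed only into normal-form coefficients, not into the averaged linear rate. Using \eqref{F}-\eqref{G} together with the identity $\Phi(\theta)=\Phi(0)e^{J\theta}$ and $e^{J\theta}=\cos\theta\, I + \sin\theta\, J$, the linear part reduces to $\dot y = J y + \varepsilon(M_g + \kappa M_f)y + O(\varepsilon^{2}+\varepsilon|y|^{2})$, where
\begin{equation*}
M_g = \int_{-\tau}^{0}\!\!\cos\theta\, d\bigl(\Psi^{\top}(0)G(\theta)\Phi(0)\bigr) + \int_{-\tau}^{0}\!\!\sin\theta\, d\bigl(\Psi^{\top}(0)G(\theta)\Phi(0)\bigr)J,
\end{equation*}
and $M_f$ is the analogous matrix obtained from $F$ instead of $G$.

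Next I would pass to rotating coordinates $y=e^{tJ}z$, which yields the slow system $\dot z = \varepsilon\, e^{-tJ}(M_g+\kappa M_f)e^{tJ}z + \cdots$, and then invoke averaging on the time scale $t\sim 1/\varepsilon$. A short linear-algebra computation shows that for any $2\times 2$ matrix $M$,
\begin{equation*}
\frac{1}{2\pi}\int_{0}^{2\pi}\!\! e^{-tJ}M\, e^{tJ}\, dt = \tfrac{1}{2}\operatorname{tr}(M)\, I - \tfrac{1}{2}\operatorname{tr}(MJ)\, J,
\end{equation*}
whose eigenvalues have real part $\tfrac{1}{2}\operatorname{tr}(M)$. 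Hence the averaged amplitude grows or decays at exponential rate $\tfrac{\varepsilon}{2}\operatorname{tr}(M_g+\kappa M_f)$, and by linearity of the trace together with the scalar definitions \eqref{Fhat}-\eqref{p} this equals $\tfrac{\varepsilon}{2}(q+\kappa p)$. The standard averaging principle then transfers asymptotic stability (resp.\ instability) of the averaged linear ODE back to the original equation \eqref{main}, provided $\varepsilon$ is sufficiently small.

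The bulk of the actual work, and the main obstacle, is the rigorous justification rather than the computation: one must check that the center-manifold reduction is valid uniformly in $\varepsilon$, that the stable-direction remainder $w$ contributes only $o(\varepsilon)$ to the amplitude equation, and that the averaging theorem is applicable despite the entire perturbation being itself of order $\varepsilon$. All three points are standard within the framework of \cite{Chow-MalletParet77}; the novelty of the statement lies in the explicit identification of the averaged trace with $q+\kappa p$ via the particular trace constructions in \eqref{Fhat}-\eqref{Ghat}.
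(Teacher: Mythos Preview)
Your proposal is correct and follows essentially the same route as the paper: decompose $x_t$ via $\Phi$ and $\Psi$, pass to rotating coordinates $y=e^{Jt}u$, average, and identify the real part of the eigenvalues of the resulting $2\times 2$ matrix with $\tfrac{1}{2}(q+\kappa p)$ using the same trace identity you wrote down. The only cosmetic differences are that the paper averages the full nonlinear perturbation first (invoking Hale's averaging theorem \cite{Hale66}, which absorbs the complementary component $\chi_t$ directly) and then linearizes the averaged equation, rather than linearizing first and then averaging as you do; no separate center-manifold estimate is invoked.
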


\begin{proof}
By assumption, the linear system (\ref{lin}) and the adjoint system
(\ref{adj}) each have a $2$-dimensional center subspace, which are spanned by
the columns of the matrices $\Phi$ and $\Psi$, respectively. The bilinear form%
\begin{equation}
(\psi,\varphi):=\psi^{\top}(0)\varphi(0)-\int_{-\tau}^{0}\int_{0}^{\theta}%
\psi^{\top}(\zeta-\theta)d\eta(\theta)\varphi(\zeta)\,d\zeta
,\,\label{bilinear-form}%
\end{equation}
where $\psi\in\mathcal{C}^{\ast}$ and $\varphi\in\mathcal{C}$, allows a
decomposition of the space $\mathcal{C}=C([-\tau,0),\mathbb{R}^{n})$
\cite{Hale66}. Accordingly, the solution $x_{t}$ of the perturbed equation
(\ref{main}) can be written as
\[
x_{t}=\Phi y(t)+\chi_{t},\quad y(t)=(\Psi,x_{t})\,
\]
for some $\chi_{t}\in\mathcal{C}$, where $y$ satisfies%
\[
\dot{y}(t)=Jy(t)+\varepsilon\Psi^{\top}(0)\left[  g(\Phi y(t)+\chi
_{t};\varepsilon)+\kappa f(\Phi y(t)+\chi_{t};\varepsilon)\right]  ,
\]
with $J$ is as defined in (\ref{J}). The change of variables $y=\exp(Jt)u$,
gives%
\begin{equation}
\dot{u}(t)=\varepsilon e^{-Jt}\Psi^{\top}(0)\left(  g(\Phi e^{Jt}u(t)+\chi
_{t};\varepsilon)+\kappa f(\Phi_{1}e^{Jt}u(t)+\chi_{t};\varepsilon)\right)
.\label{udot}%
\end{equation}
By averaging, one obtains the equation
\begin{equation}
\dot{u}=\varepsilon\bar{g}(u)+\varepsilon\kappa\bar{f}(u),\label{u-avg}%
\end{equation}
where%
\begin{align}
\bar{f}(u) &  =\lim_{T\rightarrow\infty}\frac{1}{T}\int_{0}^{T}e^{-Jt}%
\Psi^{\top}(0)f(\Phi e^{Jt}u;0)\,dt\label{fbar}\\
\bar{g}(u) &  =\lim_{T\rightarrow\infty}\frac{1}{T}\int_{0}^{T}e^{-Jt}%
\Psi^{\top}(0)g(\Phi e^{Jt}u;0)\,dt.\label{gbar}%
\end{align}
It follows by the assumptions on $f$ and $g$ that $\bar{f}(0)=\bar{g}(0)=0$;
thus the origin is an equilibrium point of the system of equations
(\ref{u-avg}). The linear variational equation about the origin is%
\begin{equation}
\dot{u}=\varepsilon(\bar{G}+\kappa\bar{F})u,\label{u-lin}%
\end{equation}
where the averaged matrices $\bar{F},\bar{G}\in\mathbb{R}^{2\times2}$ are
defined by%
\begin{align*}
\bar{F} &  =\lim_{T\rightarrow\infty}\frac{1}{T}\int_{0}^{T}e^{-Jt}\Psi^{\top
}(0)\int_{-\tau}^{0}dF(\theta)\Phi(\theta)e^{Jt}\,dt\\
\bar{G} &  =\lim_{T\rightarrow\infty}\frac{1}{T}\int_{0}^{T}e^{-Jt}\Psi^{\top
}(0)\int_{-\tau}^{0}dG(\theta)\Phi(\theta)e^{Jt}\,dt,
\end{align*}
with $F$ and $G$ given in (\ref{F})--(\ref{G}). Applying Lemma 1 in
\cite{Atay-PHYSD03} to $\bar{F}$ and $\bar{G}$, we obtain
\begin{align}
\bar{F} &  =\frac{1}{2}\operatorname*{tr}\left(  \Psi^{\top}(0)\int_{-\tau
}^{0}dF(\theta)\Phi(\theta)\right)  \cdot I-\frac{1}{2}\operatorname*{tr}%
\left(  J\Psi^{\top}(0)\int_{-\tau}^{0}dF(\theta)\Phi(\theta)\right)  \cdot
J\label{Fbar}\\
\bar{G} &  =\frac{1}{2}\operatorname*{tr}\left(  \Psi^{\top}(0)\int_{-\tau
}^{0}dG(\theta)\Phi(\theta)\right)  \cdot I-\frac{1}{2}\operatorname*{tr}%
\left(  J\Psi^{\top}(0)\int_{-\tau}^{0}dG(\theta)\Phi(\theta)\right)  \cdot
J. \label{Gbar}%
\end{align}
From (\ref{Phi}), (\ref{Fhat}), and (\ref{Fbar}), the real parts
of the eigenvalues of $\bar{F}$ are both equal to
\begin{align*}
& \frac{1}{2}\operatorname*{tr}\left(\Psi^{\top}(0)\int_{-\tau}^{0}dF(\theta)\Phi(\theta)\right)\\
= & \frac{1}{2}\operatorname*{tr}\left(\Psi^{\top}(0)\int_{-\tau}^{0}dF(\theta)\Phi(0)e^{J\theta}\right)\\
= & \frac{1}{2}\operatorname*{tr}\left(\Psi^{\top}(0)\int_{-\tau}^{0}dF(\theta)\Phi(0)(I\cos\theta+ J\sin\theta)\right)\\
= & \frac{p}{2}
\end{align*}
Similarly, the real parts of the eigenvalues of $\bar{G}$ are equal to $q/2$, so that the real parts of the eigenvalues of the matrix
$\bar{G}+\kappa\bar{F}$ in (\ref{u-lin}) are given by
$\frac{1}{2}(q+\kappa p)$.
If $(q+\kappa p)\neq0$, the averaging theorem implies that
there exists $\varepsilon_{0}>0$ and an almost periodic solution $x^{\ast
}(\varepsilon)$ of the original equation (\ref{main}) for each $\varepsilon
\in [0,\varepsilon_{0}]$, which has the same stability type as the zero
solution of (\ref{u-lin}) \cite{Hale66}. Furthermore, $x^{\ast}(0)=0,$ and
$x^{\ast}$ is unique in a neighborhood of $0\in\mathcal{C}$ and $\varepsilon
=0$. It follows that $x^{\ast}(\varepsilon)\equiv0$ for $0\leq\varepsilon
\leq\varepsilon_{0}$, since zero is an almost periodic solution of the
averaged equation (\ref{u-avg}) for all $\varepsilon$. The theorem is then
proved since the stability of the zero solution of (\ref{u-lin}) is determined
by the sign of $q+\kappa p$.
\end{proof}

On the basis of the above theorem, we say that the feedback is \textit{stabilizing} or
\textit{destabilizing} depending on whether $\kappa p$ is negative or
positive, respectively. In this sense, the quantity $p$
quantifies and compares the (de)stabilizing effect
of the various feedback schemes given by $f$.

\section{Delayed versus instantaneous feedback}

\label{sec:partial} We now consider the role of delays in feedback with
partial state information. For this purpose, we assume that the linearized
feedback $F$ given in (\ref{F}) has the form%
\begin{equation}
F(\theta)=Ch(\theta) \label{F2}%
\end{equation}
where $h:[-\tau,0]\rightarrow\mathbb{R}$ is a function of bounded variation
representing a scalar distribution of delays, and $C\in\mathbb{R}^{n\times n}$
is a structure matrix. The feedback gain $\kappa$ is set to 1, or
alternatively subsumed into the matrix $C$. The interesting case is when $C$
does not have full rank, for instance when some of the system variables are
not available for feedback.
In the following we fix the number $q$ given in (\ref{q}) by fixing $L$ and
$g$, and investigate the effect of the structure matrix $C$ and the delay
distribution $h$ on stability.

Let $\hat{C}=\Psi^{\top}(0)C\Phi(0)$. From (\ref{p}) and (\ref{F2}),
it is seen that
\begin{equation}
p=\alpha\operatorname{tr}(\hat{C})+\beta\operatorname{tr}%
(\hat{C}J),\label{Re-p}%
\end{equation}
where
\begin{equation}
\alpha=\int_{-\tau}^{0}\cos\theta\,dh(\theta),\quad\beta=\int_{-\tau}^{0}%
\sin\theta\,dh(\theta).\label{ab}%
\end{equation}
In the absence of delays, i.e., when $h(\theta)$ is a Heaviside step 
function at zero, one has $\alpha=1$ and $\beta=0$, yielding
$p=\operatorname{tr}(\hat{C})$ for undelayed feedback. Hence, delayed
feedback is more stabilizing than undelayed feedback if%
\[
\alpha\operatorname{tr}(\hat{C})+\beta\operatorname{tr}%
(\hat{C}J)<\operatorname{tr}(\hat{C})
\]
or equivalently, if%
\begin{equation}
(1-\alpha)\operatorname{tr}(\hat{C})>\beta\operatorname{tr}%
(J\hat{C})\label{sta}%
\end{equation}
Similarly, delayed feedback is more destabilizing if (\ref{sta}) holds with
the inequality reversed. Although in applications the delays are often viewed
as destabilizing factors, the condition (\ref{sta}) shows the role of delays
in inducing stability. In particular, if $\operatorname{tr}(\hat{C})=0$ then
instantaneous feedback has no effect on the stability of the zero solution.
This case occurs, for instance, if only some of the system variables are used
in the feedback. However, if $\beta\operatorname{tr}(\hat{C}J)\neq0$, then by
(\ref{sta}) delayed feedback of the same variables can stabilize or
destabilize the zero solution, depending on the values of $\alpha$ and $\beta$.

To illustrate with an example, consider the classical van der Pol oscillator
with linear feedback control
\begin{equation}
\ddot{y}+\varepsilon(y^{2}-1)\dot{y}+y=\varepsilon\int_{-\tau}^{0}\left[
c_{1}y(t+\theta)+c_{2}\dot{y}(t+\theta)\right]  \,dh(\theta),\quad
0<\varepsilon\ll1.\label{vander}%
\end{equation}
With $x=(y,\dot{y})$, the linear equation around the origin is%
\[
\dot{x}(t)=-Jx(t)+\varepsilon\left(
\begin{array}
[c]{cc}%
0 & 0\\
0 & 1
\end{array}
\right)  x(t)+\varepsilon\left(
\begin{array}
[c]{cc}%
0 & 0\\
c_{1} & c_{2}%
\end{array}
\right)  \int_{-\tau}^{0}x(t+\theta)\,dh(\theta).
\]
We have
\[
\hat{C}=C=\left(
\begin{array}
[c]{cc}%
0 & 0\\
c_{1} & c_{2}%
\end{array}
\right)  ,
\]
giving $p=\alpha c_{2}-\beta c_{1}$. If the feedback is
instantaneous (i.e. without delays), then $\alpha=1$ and $\beta=0$, so that
$p$ depends only on the velocity feedback $c_{2}$. In this
case, the origin cannot be stabilized if velocity information is not available
for feedback. By contrast, if the feedback is delayed, then using only
position information can yield stability provided $\beta c_{1}>1$, by
Theorem \ref{thm:stab}. Figure \ref{fig:2} shows the quenching of oscillations
and the stabilization of the origin when $c_{2}=0$ and $h$ represents a
discrete delay at $\tau=1$.

\begin{figure}[ptb]
\begin{center}
\includegraphics[scale=0.45,angle=-90]{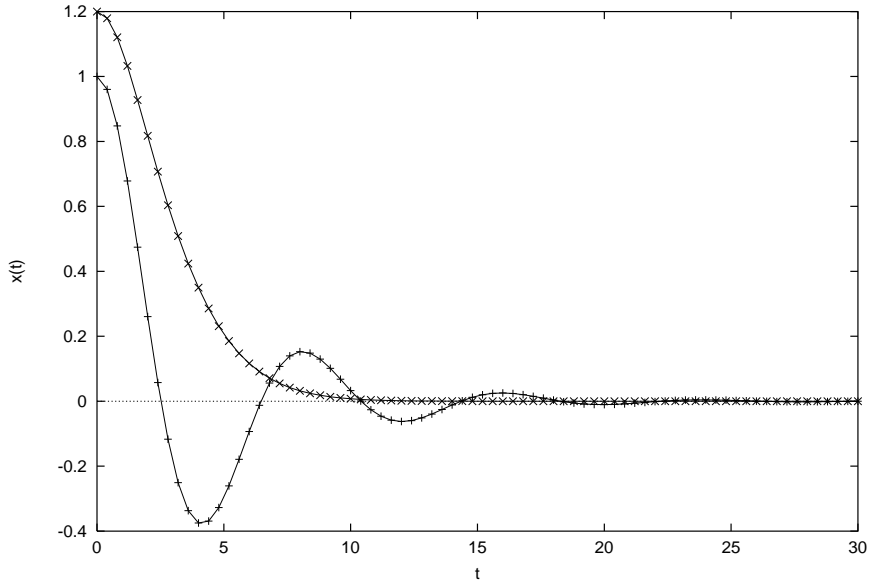}
\end{center}
\caption{Stabilization of the zero solution of the van der Pol oscillator
using position feedback, for feedback gain $c_{1}$ equal to $5$(+) and
$7.8$($\times$), and a discrete feedback delay at  $\tau=1$.
Other parameters are $c_{2}=0$, and $\varepsilon=0.1$.}%
\label{fig:2}%
\end{figure}

In closing this section we note that we have confined our discussion to linear
feedback. If nonlinear terms are added to the feedback, then it is possible to
further shape the system's behavior in addition to changing its stability. For
example, it has been shown that a limit cycle of desired amplitude can be
created \cite{Atay-IJC02}, or the period can be modified within certain
limitations \cite{Atay-LNCIS02}.

\section{Distributed versus discrete delays}

\label{sec:versus}
An interesting
question in stability investigations is how various distributions of delays about a given mean value affects
stability. In particular, one is interested in the difference between
distributed delays having the same mean delay
$\bar{\tau}=\int_{-\tau}^{0}\theta\,dh(\theta)$ and a discrete delay
at $\bar{\tau}$.
For example, Ref.~\cite{Boese89} studied the
stability of the Cushing equation with discrete and gamma-distributed delays.
In the context of a first-order system, it has been discussed that the
stability tends to improve with increasing variance of the delay distribution
\cite{Anderson91}, and it was~conjectured that a discrete delay at $\bar{\tau
}$ is more destabilizing than distributed delays having mean $\bar{\tau}$
\cite{Bernard01}. A further example involving coupled oscillators indeed
showed that increasing the variance of the delay distribution can enlarge the
stability region in the parameter domain \cite{PRL03}. We shall show that
these observations are true in a certain sense for Hopf instabilities of more
general systems. More precisely, when the delays act towards destabilizing the
system, the discrete delay is locally the most destabilizing one among delay
distributions having the same mean value. On the other hand, delays can also
stabilize an unstable equilibrium point, as seen in the previous section. In
this case the discrete delay is locally the most stabilizing delay distribution.

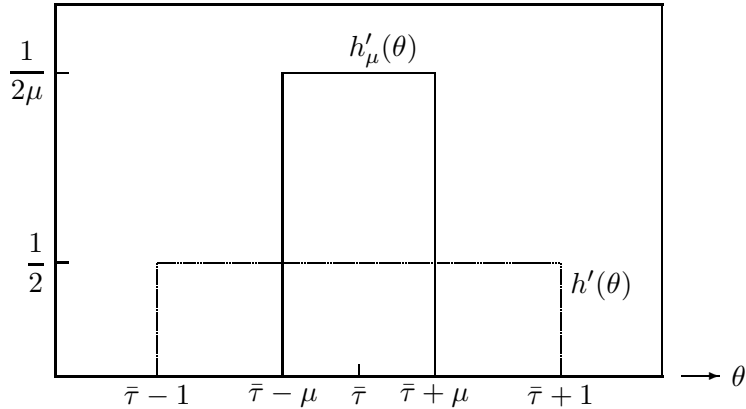
\begin{figure}
\begin{center}
\setlength{\unitlength}{0.18pt}
\begin{picture}(1600,900)(0,0)
\put(160,717){\makebox(0,0)[r]{$\dfrac{1}{2\mu}$}}
\put(180.0,717.0){\line(1,0){25}}
\put(160,320){\makebox(0,0)[r]{$\dfrac{1}{2}$}}
\put(180.0,320.0){\line(1,0){25}}
\put(1229,41){\makebox(0,0){$\bar{\tau}+1$}}
\put(967,41){\makebox(0,0){$\bar{\tau}+\mu$}}
\put(810,41){\makebox(0,0){$\bar{\tau}$}}
\put(810.0,82.0){\line(0,1){25}}
\put(652,41){\makebox(0,0){$\bar{\tau}-\mu$}}
\put(390,41){\makebox(0,0){$\bar{\tau}-1$}}
\put(1250,273){\makebox(0,0)[l]{$h^{\prime}(\theta)$}}
\put(789,765){\makebox(0,0)[l]{$h^{\prime}_{\mu} (\theta)$}}
\put(390,81.1){\dashbox{3}(839,238){}}
\put(390,82){\line(0,1){25}}
\put(1229,82){\line(0,1){25}}
\put(652,82){\framebox(315,635){}}
\put(180,82){\framebox(1259,778){}}
\put(1480,82){\vector(1,0){80}}
\put(1600,82){\makebox(0,0)[c]{$\theta$}}
\end{picture}
\caption{The densities $h^{\prime}_\mu$ corresponding to
uniformly distributed delays
about the mean value $\bar{\tau}$ and parametrized by $\mu$.}
\label{fig:unif-dist}
\end{center}
\end{figure}

To give a systematic study on the effect of delay distributions, we let
$\bar{\tau}$ be fixed and consider a family of distributions having mean value
$\bar{\tau}$. For this purpose, let $h$ be some reference distribution with
compact support satisfying\footnote{For simplicity,
here we view $h$ as a probability distribution,
that is, a monotone function satisfying (\ref{probability}).
The particular choice of $h$ will not be important
for the following discussion.}
\begin{equation}
\int_{-\infty}^{\infty}dh(\theta)=1,\quad\int_{-\infty}^{\infty}%
\theta\,dh(\theta)=\bar{\tau},\quad\int_{-\infty}^{\infty}(\theta-\bar{\tau
})^{2}\,dh(\theta)=\sigma^{2}\text{,}%
\label{probability}
\end{equation}
and define a family of distributions parametrized by $\mu>0,$%
\begin{equation}
h_{\mu}(\theta)=h(\bar{\tau}+(\theta-\bar{\tau})/\mu).\label{h-mu}%
\end{equation}
(Figure~\ref{fig:unif-dist} depicts the rescaling (\ref{h-mu})
for the case of uniformly distributed delays.)
It is easy to check that%
\[
\int_{-\infty}^{\infty}dh_{\mu}(\theta)=1,\quad\int_{-\infty}^{\infty}%
\theta\,dh_{\mu}(\theta)=\bar{\tau},\quad\int_{-\infty}^{\infty}(\theta
-\bar{\tau})^{2}\,dh_{\mu}(\theta)=\mu^{2}\sigma^{2}\text{.}%
\]
Hence, the family $h_{\mu}$ provides a natural way to change the
variance $\mu^{2}\sigma^{2}$ of the distribution while keeping the mean value
fixed. We denote the corresponding values in (\ref{ab}) as%
\begin{equation}
\alpha_{\mu}=\int_{-\infty}^{\infty}\cos\theta\,dh_{\mu}(\theta),\quad
\beta_{\mu}=\int_{-\infty}^{\infty}\sin\theta\,dh_{\mu}(\theta).\label{ab-mu}%
\end{equation}
We also define%
\begin{equation}
\alpha_{0}=\cos\bar{\tau},\text{\quad}\beta_{0}=\sin
\bar{\tau},\label{ab-0}%
\end{equation}
i.e., the values of $\alpha,\beta$ for a discrete delay at $\bar{\tau}$, which
is equivalent to letting $h_{0}$ be a Heaviside step function at $\bar{\tau}$. 
The question is, for a fixed structure matrix $\hat{C}$, how the quantity
\begin{equation}
p_{\mu}=\alpha_{\mu}\operatorname{tr}(\hat{C})+\beta_{\mu
}\operatorname{tr}(J\hat{C})\label{Re-p-mu}%
\end{equation}
changes as $\mu$ is varied.

Before considering the general case, it is instructive to first look
at the specific example
of uniformly distributed delays shown in Figure~\ref{fig:unif-dist}.
Here one has
\[
\alpha_\mu
= \frac{1}{\mu} \sin\mu \cos\bar{\tau},
\quad
\beta_\mu
= \frac{1}{\mu} \sin\mu \sin\bar{\tau},
\]
which yields
\begin{align}
p_\mu & = \frac{\sin\mu}{\mu}
\left( \operatorname{tr}(\hat{C})\cos\bar{\tau}
+\operatorname{tr}(J\hat{C})\sin\bar{\tau} \right) \nonumber \\
& = \frac{\sin\mu}{\mu} p_0.
\label{pmu}
\end{align}
It is thus seen that the dependence of $p_\mu$ on the parameter $\mu$
is not monotone. In fact, by Theorem~\ref{thm:stab}, the sign changes
of $p_\mu$ for varying values of $\mu$ indicates that stability switches can occur as the variance of the delay distribution is changed.
Nevertheless, since $|\sin\mu| < |\mu|$ for all $\mu>0$, one has
$|p_0| > |p_\mu|$, which shows that a discrete delay has a stronger
effect on stability than all uniformly distributed delays
having the same mean value.
Moreover, there are certain values of the distribution variance
(given by $\sin\mu=0$, $\mu>0$) for which the feedback has no effect on stability.
This last property is purely an effect of the delay variance and is independent of the choice of the structure matrix $C$.

The extremal property of discrete delays observed above can be
extended to more general delay distributions.
We first give a local characterization.

\begin{proposition}%
\label{thm:local}
\[
\left.  \frac{\partial p_{\mu}}{\partial\mu} \right|
_{\mu=0}=0\text{,\quad and\quad}\left.
\frac{\partial^{2}p_{\mu}}{\partial\mu^{2}%
}\right|  _{\mu=0}=-\sigma^{2}p_{0}
\]
\end{proposition}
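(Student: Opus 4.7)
The plan is to substitute the rescaling (\ref{h-mu}) into the integrals (\ref{ab-mu}) via a change of variables, so that $\mu$ appears as an explicit parameter inside the integrand rather than in the measure. Writing $s=\bar\tau+(\theta-\bar\tau)/\mu$, so that $\theta=\bar\tau+\mu(s-\bar\tau)$ and $dh_\mu(\theta)=dh(s)$, one obtains
\begin{align*}
\alpha_\mu &= \int_{-\infty}^{\infty}\cos\bigl(\bar\tau+\mu(s-\bar\tau)\bigr)\,dh(s),\\
\beta_\mu  &= \int_{-\infty}^{\infty}\sin\bigl(\bar\tau+\mu(s-\bar\tau)\bigr)\,dh(s).
\end{align*}
Setting $\mu=0$ recovers (\ref{ab-0}), so the formulas are consistent with interpreting $h_0$ as a point mass at $\bar\tau$.

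With the parameter now inside a smooth integrand of compact support (in $s$), standard differentiation under the integral sign is justified and the derivatives in $\mu$ can be computed termwise. The first derivative brings down a factor $(s-\bar\tau)$, and evaluating at $\mu=0$ pulls the trigonometric factor out of the integral, leaving $\int(s-\bar\tau)\,dh(s)$, which vanishes by (\ref{probability}). Hence $\partial\alpha_\mu/\partial\mu|_{\mu=0}=\partial\beta_\mu/\partial\mu|_{\mu=0}=0$, and the first equality in the proposition follows from the linearity of (\ref{Re-p-mu}).

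For the second derivative, the integrand picks up a factor $(s-\bar\tau)^2$ together with a sign change coming from twice differentiating the trigonometric functions. Evaluating at $\mu=0$ yields
\begin{align*}
\left.\frac{\partial^2\alpha_\mu}{\partial\mu^2}\right|_{\mu=0}&=-\sigma^2\cos\bar\tau,\\
\left.\frac{\partial^2\beta_\mu}{\partial\mu^2}\right|_{\mu=0}&=-\sigma^2\sin\bar\tau,
\end{align*}
again by (\ref{probability}). Plugging these into (\ref{Re-p-mu}) and recognizing $p_0=\cos\bar\tau\,\operatorname{tr}(\hat C)+\sin\bar\tau\,\operatorname{tr}(J\hat C)$ from (\ref{ab-0}) gives the second identity.

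There is no real obstacle here; the only point requiring a brief justification is the interchange of differentiation and integration, which is immediate because $h$ has compact support and the integrand together with its $\mu$-derivatives are jointly continuous and uniformly bounded on that support. Everything else is a direct computation from the change of variables.
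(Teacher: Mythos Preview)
Your proof is correct and follows essentially the same route as the paper: the same change of variables $\theta=\bar\tau+\mu(s-\bar\tau)$ to bring $\mu$ inside the integrand, followed by differentiation under the integral sign and evaluation at $\mu=0$ using the moment conditions (\ref{probability}). You are somewhat more explicit about justifying the interchange of differentiation and integration via the compact support of $h$, but otherwise the arguments coincide.
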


\begin{proof}
By a change of variables in (\ref{ab-mu}) and using (\ref{h-mu}), we have
\begin{equation}
\alpha_{\mu}=\int_{-\infty}^{\infty}\cos\theta\,dh_{\mu}(\theta)=\int
_{-\infty}^{\infty}\cos(\bar{\tau}+\mu(s-\bar{\tau}))\,dh(s).
\label{alpha_mu}
\end{equation}
Using (\ref{ab-0}) it is seen that $\alpha_{\mu}$ and $\beta_{\mu}$ are smooth
functions of $\mu$ on $\mathbb{R}$. Differentiating under the integral gives%
\[
\frac{\partial\alpha_{\mu}}{\partial\mu}=-\int_{-\infty}^{\infty}\sin
(\bar{\tau}+\mu(s-\bar{\tau}))(s-\bar{\tau})\,dh(s).
\]
Thus $\partial\alpha_{\mu}/\partial\mu|_{\mu=0}=0$. Similarly, $\partial
\beta_{\mu}/\partial\mu|_{\mu=0}=0.$ On the other hand,%
\begin{align*}
\left.  \frac{\partial^{2}\alpha_{\mu}}{\partial\mu^{2}}\right|  _{\mu=0} &
=-\sigma^{2}\cos\bar{\tau}=-\sigma^{2}\alpha_{0},\\
\left.  \frac{\partial^{2}\beta_{\mu}}{\partial\mu^{2}}\right|  _{\mu=0} &
=-\sigma^{2}\sin\bar{\tau}=-\sigma^{2}\beta_{0}.
\end{align*}
Using in (\ref{Re-p-mu}), we obtain the conclusion.
\end{proof}

By the above result, if $p_{0}$ is nonzero, then it is a
local extremum for $p_{\mu}$. This shows that discrete
delays are indeed special in a certain sense. Thus, if the delayed feedback is
a destabilizing one ($p_{\mu}>0$), then a discrete delay is
locally the most destabilizing delay distribution, and increasing the variance
of the distribution reduces $p_{\mu}$. This confirms the
observations of \cite{Anderson91,Bernard01,PRL03} and shows that it is
generally true near a Hopf instability. However, as noted above, delays can
also have a stabilizing effect ($p_{\mu}<0$), in which case
a discrete delay is locally the most stabilizing one, and increasing the
variance of the distribution can yield instability. In both cases, increasing
the variance of the distribution locally about a discrete delay reduces the
effect of delays in the feedback.

For symmetrically distributed delays, we can also give a global
characterization of the extremal property of discrete delays.

\begin{proposition}
\label{thm:global}
For delay distributions that are symmetrically distributed about
their mean value\footnote{That is,
$h^{\prime}(\bar{\tau}+\theta)=h^{\prime}(\bar{\tau}-\theta)$,
where the derivative exists a.~e.~by assumption.},
 $|p_\mu| \le |p_0|$ for all $\mu > 0$.
\end{proposition}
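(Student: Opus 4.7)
The plan is to exploit the symmetry of $h$ about $\bar{\tau}$ to show that $\alpha_\mu$ and $\beta_\mu$ factor as a common scalar multiple of $\alpha_0$ and $\beta_0$, which will make $p_\mu$ a scalar multiple of $p_0$, and then bound that scalar by $1$.

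First I would start from the formula derived in the proof of Proposition~\ref{thm:local},
\[
\alpha_\mu = \int_{-\infty}^\infty \cos(\bar{\tau}+\mu(s-\bar{\tau}))\,dh(s),\qquad
\beta_\mu  = \int_{-\infty}^\infty \sin(\bar{\tau}+\mu(s-\bar{\tau}))\,dh(s),
\]
and change variables $u = s-\bar{\tau}$. The hypothesis that $h$ is symmetric about $\bar{\tau}$ translates into the measure $d\tilde h(u) := dh(\bar{\tau}+u)$ being symmetric about $0$.

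Next I would apply the addition formulas $\cos(\bar\tau+\mu u) = \cos\bar\tau\cos(\mu u) - \sin\bar\tau\sin(\mu u)$ and $\sin(\bar\tau+\mu u) = \sin\bar\tau\cos(\mu u) + \cos\bar\tau\sin(\mu u)$. Because $\sin(\mu u)$ is odd in $u$ and $d\tilde h$ is symmetric, the $\sin$ integrals vanish. Setting
\[
\phi(\mu) := \int_{-\infty}^\infty \cos(\mu u)\,d\tilde h(u),
\]
I obtain $\alpha_\mu = \phi(\mu)\cos\bar\tau = \phi(\mu)\alpha_0$ and $\beta_\mu = \phi(\mu)\sin\bar\tau = \phi(\mu)\beta_0$. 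Substituting into (\ref{Re-p-mu}) gives the clean identity
\[
p_\mu = \phi(\mu)\bigl(\alpha_0\operatorname{tr}(\hat C)+\beta_0\operatorname{tr}(J\hat C)\bigr) = \phi(\mu)\,p_0.
\]

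The final step is the bound $|\phi(\mu)|\le 1$, which follows immediately from $|\cos(\mu u)|\le 1$ and the fact that $h$ is a probability distribution (so $\int d\tilde h = 1$). Combining this with $p_\mu = \phi(\mu)p_0$ yields $|p_\mu|\le |p_0|$ for every $\mu>0$, which is the desired global extremality. There is no real obstacle here; the only thing to be careful about is recognizing that $\phi(\mu)$ is precisely the characteristic function of the centred symmetric distribution, which is automatically real-valued by symmetry and bounded by $1$ by the triangle inequality. The example of uniformly distributed delays worked out just before the proposition is the special case $\phi(\mu)=\sin\mu/\mu$, which confirms the computation.
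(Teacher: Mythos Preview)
Your proof is correct and follows essentially the same route as the paper: expand the trigonometric functions via the addition formula, use the symmetry of $h$ about $\bar{\tau}$ to kill the sine integrals, and conclude that $p_\mu$ equals $p_0$ times a cosine integral bounded in absolute value by $1$. The only cosmetic differences are your explicit substitution $u=s-\bar{\tau}$ and your identification of the scalar factor $\phi(\mu)$ as a characteristic function, both of which are nice touches but do not change the argument.
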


\begin{proof}
Expanding the cosine term in (\ref{alpha_mu}),
\[
\alpha_\mu =
\cos\bar{\tau}\int^{\infty}_{-\infty}\cos(\mu(s-\bar{\tau}))\,dh(s) -
\sin\bar{\tau}\int^{\infty}_{-\infty}\sin(\mu(s-\bar{\tau}))\,dh(s).
\]
The second integral vanishes because the distribution is symmetric about
$\bar{\tau}$ and sine is an odd function. Thus by (\ref{ab-0}),
$\alpha_\mu =
\alpha_0\int^{\infty}_{-\infty}\cos(\mu(s-\bar{\tau}))\,dh(s)$,
and similarly
$\beta_\mu =
\beta_0\int^{\infty}_{-\infty}\cos(\mu(s-\bar{\tau}))\,dh(s)$.
Hence, from (\ref{Re-p-mu})),
\[
p_\mu = p_0 \int^{\infty}_{-\infty}\cos(\mu(s-\bar{\tau}))\,dh(s).
\]
Then the estimate
\[
|p_\mu| \le |p_0| \int^{\infty}_{-\infty}|\cos(\mu(s-\bar{\tau}))|\,
dh(s)\le |p_0| \int^{\infty}_{-\infty}\,dh(s) = |p_0|
\]
follows.
\end{proof}

Finally we note that $p_0$ depends only on the mean delay and not on
the distribution $h$. Hence, the extremal properties of discrete delays
given in Propositions \ref{thm:local} and \ref{thm:global} are independent of the particular choice of the reference distribution $h$.
\medskip
\medskip

\end{document}